\documentclass[11pt]{article} 
\usepackage{latexsym}
\usepackage{amsfonts, manfnt}
\usepackage{amsthm}
\usepackage{amsmath}
\usepackage{amssymb}
\usepackage{setspace}
\usepackage{eucal}
\usepackage{graphicx}
\usepackage[all, poly, knot]{xy}
\usepackage{verse, play}
\input xy
\xyoption{all}
\xyoption{knot}
\xyoption{arc}


\newtheorem{theorem}{Theorem}

\theoremstyle{definition}

\theoremstyle{remark}

\numberwithin{equation}{section}
\theoremstyle{plain}

\newtheorem{case}{Case}

\setstretch{1.2}


\addtolength{\hoffset}{-2.25cm} \addtolength{\textwidth}{4cm}
\addtolength{\voffset}{-1.5cm} \addtolength{\textheight}{1cm}

\pagestyle{myheadings}


\begin{document}
\thispagestyle{empty}
 \title{A Midsummer Knot's Dream}
 \author{A. Henrich, N. MacNaughton, S. Narayan, O. Pechenik, R. Silversmith, J. Townsend}

\maketitle 

\section*{Prologue}
\poemlines{5}

\begin{verse}
\verselinenumbersleft
\verselinenumfont{\itshape \tiny} 
Five undergrads and one new PhD,\\
In Williamstown, where we lay our scene,\\
From ancient math, break to a new theory,\\
Where playing games makes knots from shadows glean.\\
From forth the fertile brains of these fine friends,\\
A pair of knotty games are brought to life;\\
In the few weeks before the summer ends,\\
With joy and happiness the REU is rife,\\
As knotty theorems six companions prove,\\
And battles of wits two feuding players wage.\\
One tangle would, one a knot would remove---\\
This math is now the content of our page.\\
The which, if you with patient mind attend,\\
To what is writ, new theorems please append.
\end{verse}

\section*{Act I: What is past is prologue}

A \emph{knot} is an embedding of a circle in $\mathbb{R}^3$. We say two knots are equivalent if there is a way to deform one into the other without breaking the circle (or passing it through itself). You can think of a knot as a tangled piece of rope with its ends glued together. Anything that can be done to the rope short of cutting it (e.g. stretching, pulling, tangling) will preserve the knot type. Any knot equivalent to the unit circle in a plane of $\mathbb{R}^3$ is called the \emph{unknot}.

We typically study knots by looking at their diagrams, such as those in Figure~\ref{Diagram}. Early in the $20^\text{th}$ century, Kurt Reidemeister showed that two knot diagrams represent the same knot if and only if one diagram can be obtained from the other by a sequence of the moves pictured in Figure~\ref{ReidMoves}, now known as \emph{Reidemeister moves}.

\begin{figure}[h]
\begin{center}\includegraphics[height=1in]{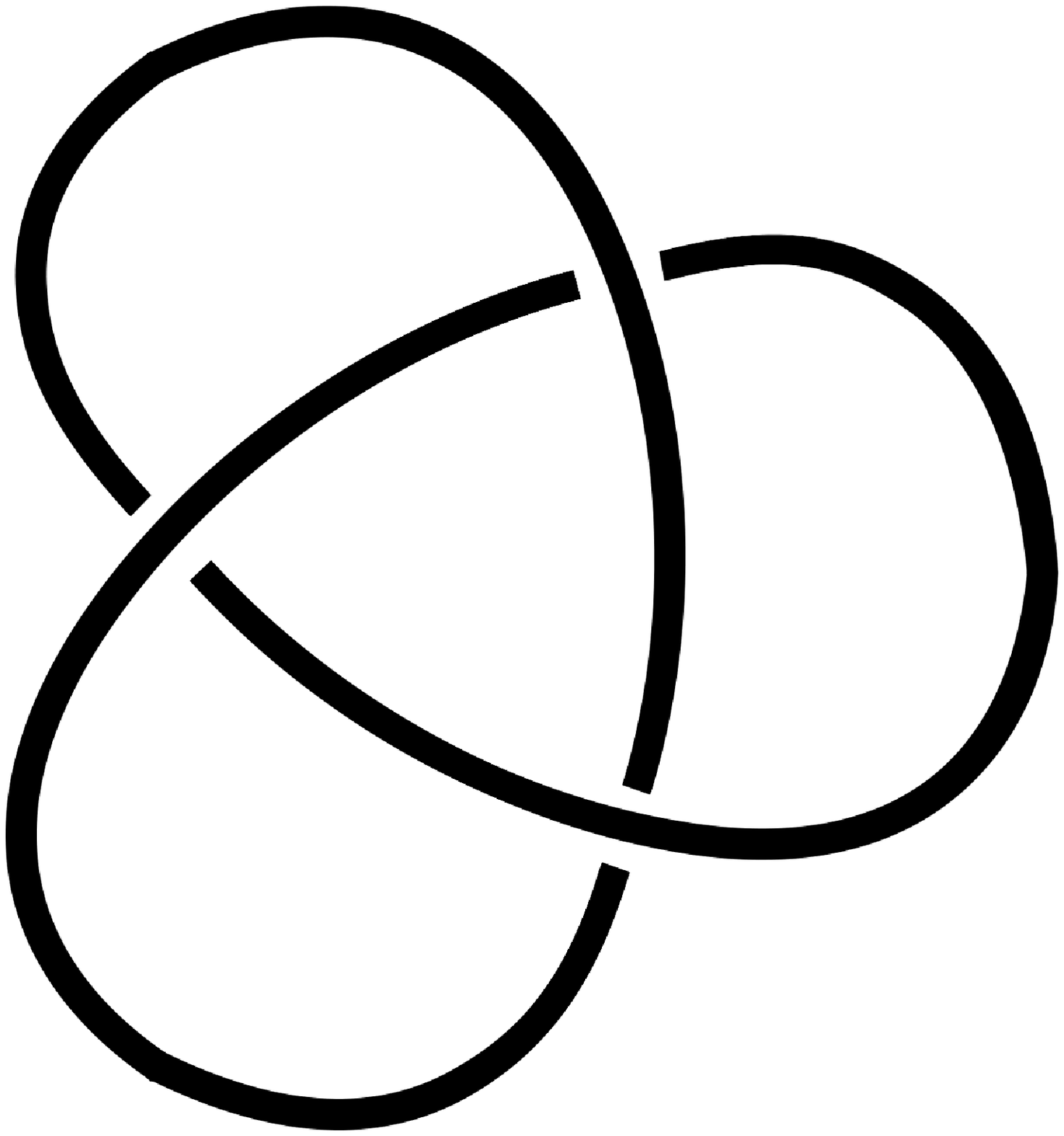}\hspace{.2in}
\includegraphics[height=1in]{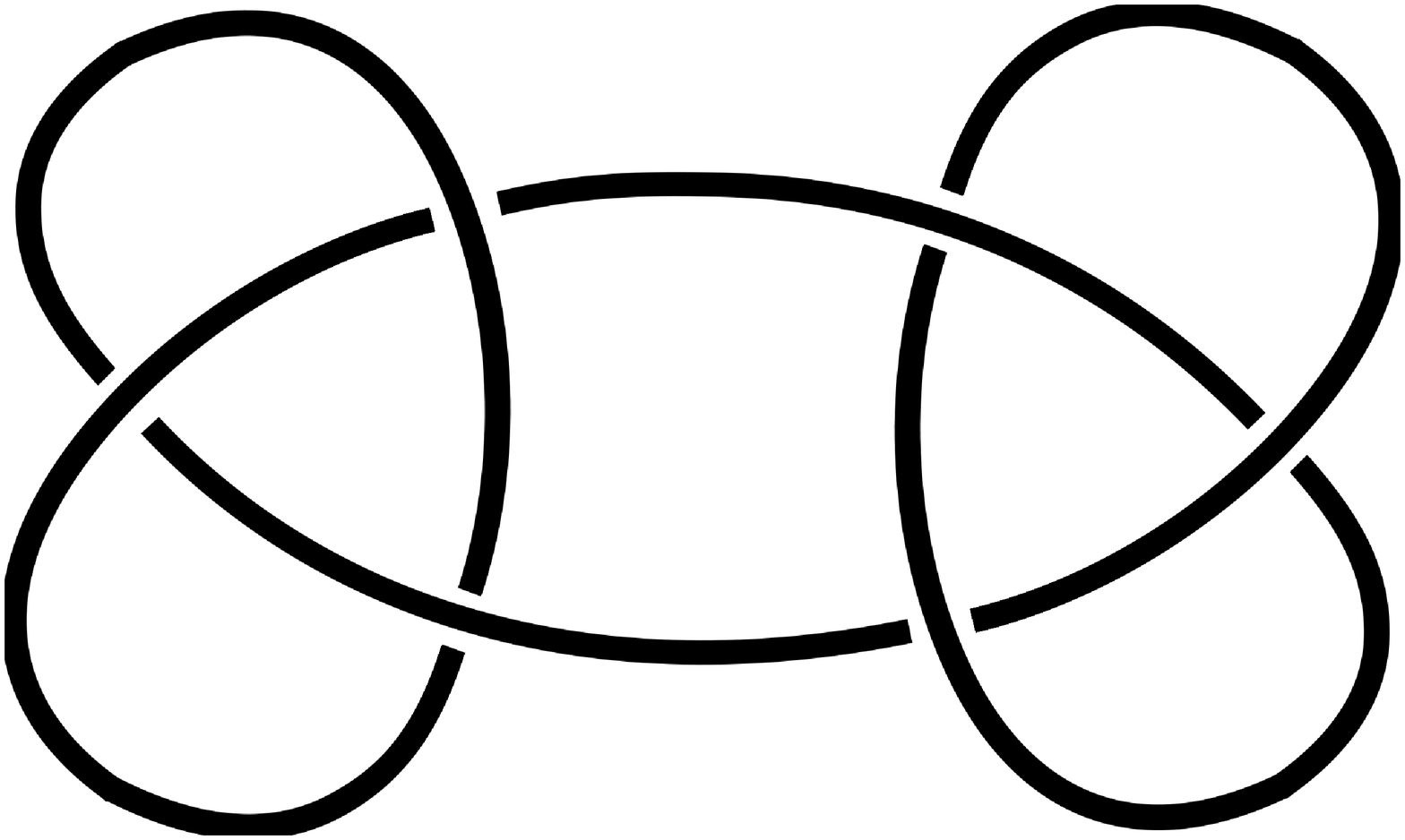}\hspace{.2in}
\includegraphics[height=1.2in]{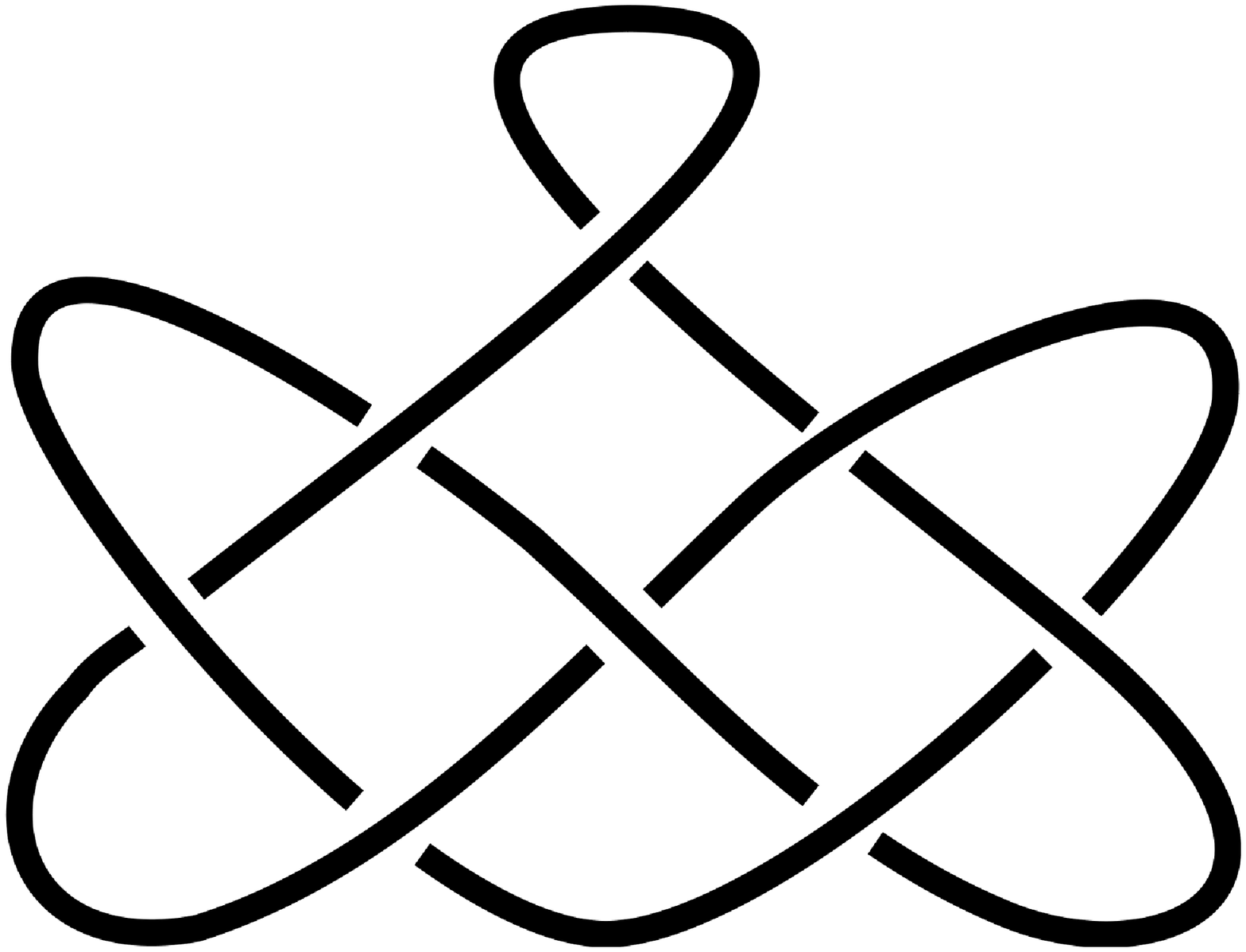}
\end{center}
\caption{An assortment of knot diagrams}\label{Diagram}
\end{figure}

\begin{figure}[h]
\[
\xy
(-50,-10)*{}; (-50,10)*{} **\dir{-};
{\ar@{<->}(-47,0)*{}; (-42,0)*{}}; ?(.75)*dir{>}+(-1, 3)*{(0)};
(-35,-10)*{}="A";
(-35,10)*{}="B";
"A"; "B" **\crv{(-30,-5) & (-40,-2) & (-32,1) & (-38,6)};
(-10,-10)*{}; (-10,10)*{} **\dir{-};
{\ar@{<->}(-7,0)*{}; (-2,0)*{}}; ?(.75)*dir{>}+(-1, 3)*{(1)};
(5,10)*{}="C";
(5, -10)*{}="D";
(5,5)*{}="C'";
(5,-5)*{}="D'";
"C"; "C'" **\dir{-};
"D"; "D'" **\dir{-};
(8,0)*{}="MB";
(12,0)*{}="LB";
"C'"; "LB" **\crv{(6, -4) & (12,-4)}; \POS?(.25)*{\hole}="2z";
"LB"; "2z" **\crv{(13, 6) & (7, 6)};
"2z"; "D'" **\crv{(5,-3)};
(30,10)*{}="E";
(30, -10)*{}="F";
"E"; "F" **\crv{(34, 0)};
(40,10)*{}="G";
(40, -10)*{}="H";
"G"; "H" **\crv{(36, 0)};
{\ar@{<->}(43,0)*{}; (48,0)*{}}; ?(.75)*dir{>}+(-1, 3)*{(2)};
(50,10)*{}="I";
(50, -10)*{}="J";
"I"; "J" **\crv{(62,0)}; \POS?(.25)*{\hole}="2x"; \POS?(.75)*{\hole}="2y";
(60,10)*{}="K";
(60, -10)*{}="L";
"K"; "2x" **\crv{(55,7)};
"2x"; "2y" **\crv{(50, 0)};
"2y"; "L" **\crv{(55, -7)};
\endxy
\]
\[
\xy
(75,10)*{}="AT";
(85, -10)*{}="AB";
(85, 10)*{}="BT";
(75, -10)*{}="BB";
(71, -2)*{}="CL";
(89, -2)*{}="CR";
"CL"; "CR" **\crv{(80, -10)}; \POS?(.35)*{\hole}="a"; \POS?(.65)*{\hole}="b"; 
"AT"; "b" **\crv{}; \POS?(.35)*{\hole}="c";
"b"; "AB" **\crv{};
"BB"; "a" **\crv{};
"a"; "c" **\crv{};
"c"; "BT" **\crv{};
{\ar@{<->}(92,0)*{}; (98,0)*{}}; ?(.75)*dir{>}+(-1, 3)*{(3)};
(105,10)*{}="A'T";
(115, -10)*{}="A'B";
(115, 10)*{}="B'T";
(105, -10)*{}="B'B";
(101, 2)*{}="C'L";
(119, 2)*{}="C'R";
"C'L"; "C'R" **\crv{(110, 10)}; \POS?(.35)*{\hole}="a'"; \POS?(.65)*{\hole}="b'"; 
"A'T"; "a'" **\crv{};
"a'"; "A'B" **\crv{};\POS?(.65)*{\hole}="c'";
"B'B"; "c'" **\crv{};
"c'"; "b'" **\crv{};
"b'"; "B'T" **\crv{};
\endxy
\]
\caption{Reidemeister moves}\label{ReidMoves}
\end{figure}

Take another look at Figure~\ref{Diagram}. What would happen if we lost information at some crossings about which strand is on top? Would we still be able to tell if the knot is knotted (i.e., non-trivial), or the unknot (trivial)? In some cases, we can lose a fair amount of information about our knot without sacrificing our ability to tell if it is knotted. 

We will refer to diagrams of knots containing only partial crossing information as \emph{pseudodiagrams}, a concept developed by Hanaki \cite{hanaki} and ourselves \cite{us}. A crossing that is undetermined is called a \emph{precrossing}. If a pseudodiagram contains no under-/over-strand information about any of its crossings (that is, if all crossings are precrossings), then we will refer to it as a \emph{shadow}. Figure~\ref{Pseudo} illustrates this terminology.

\begin{figure}[h]
\begin{center}\includegraphics[height=1in]{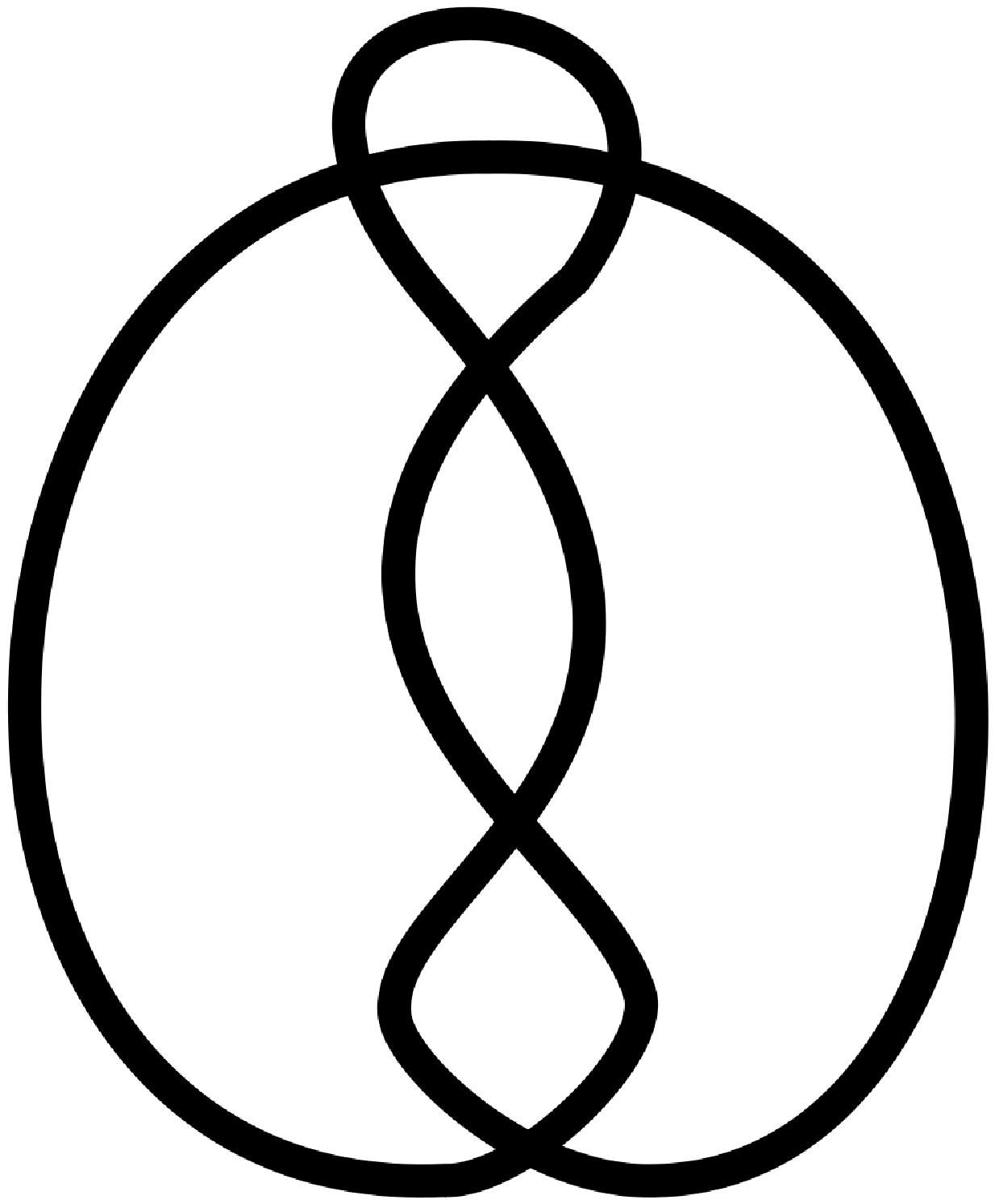}
\includegraphics[height=1in]{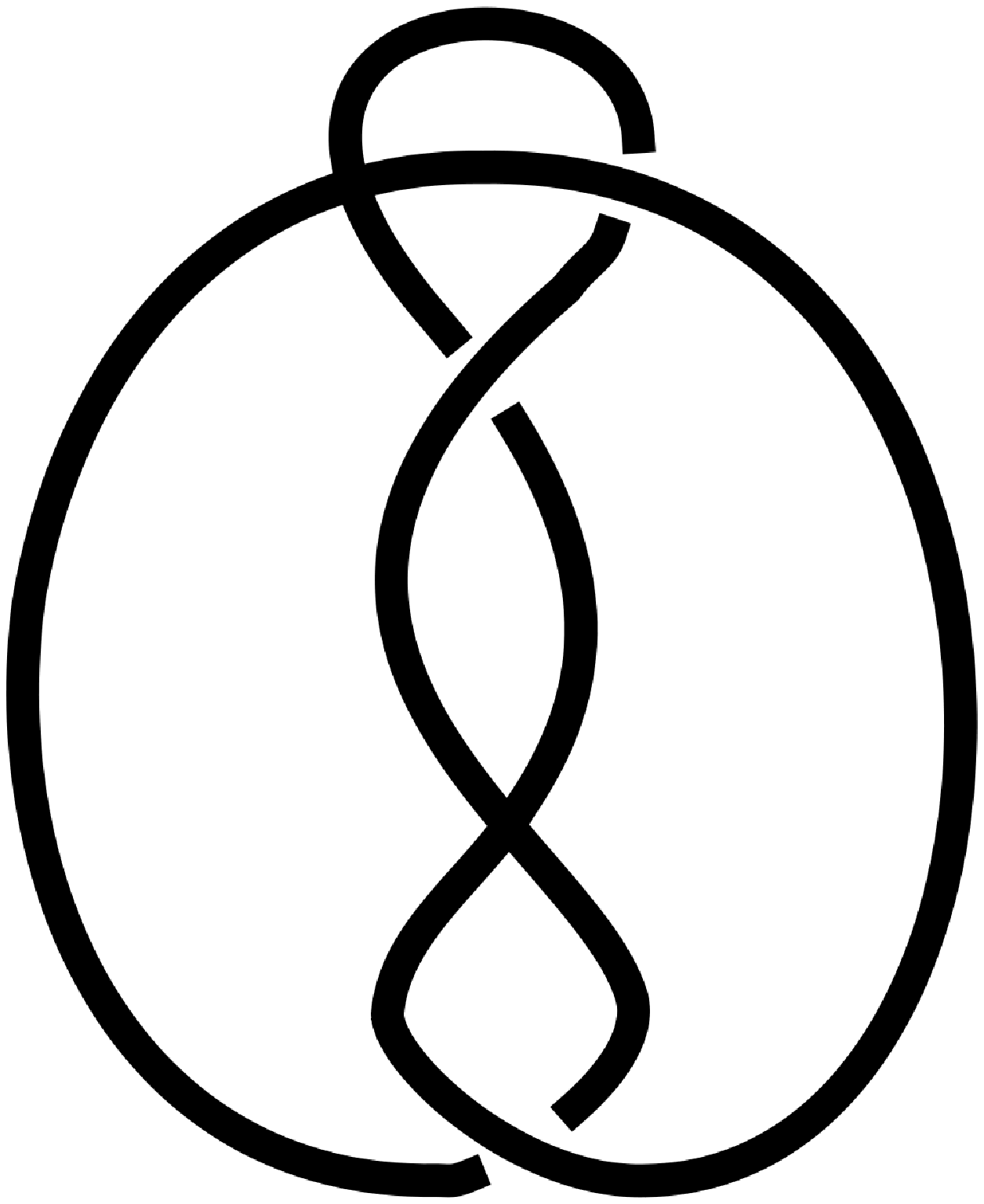}
\includegraphics[height=1in]{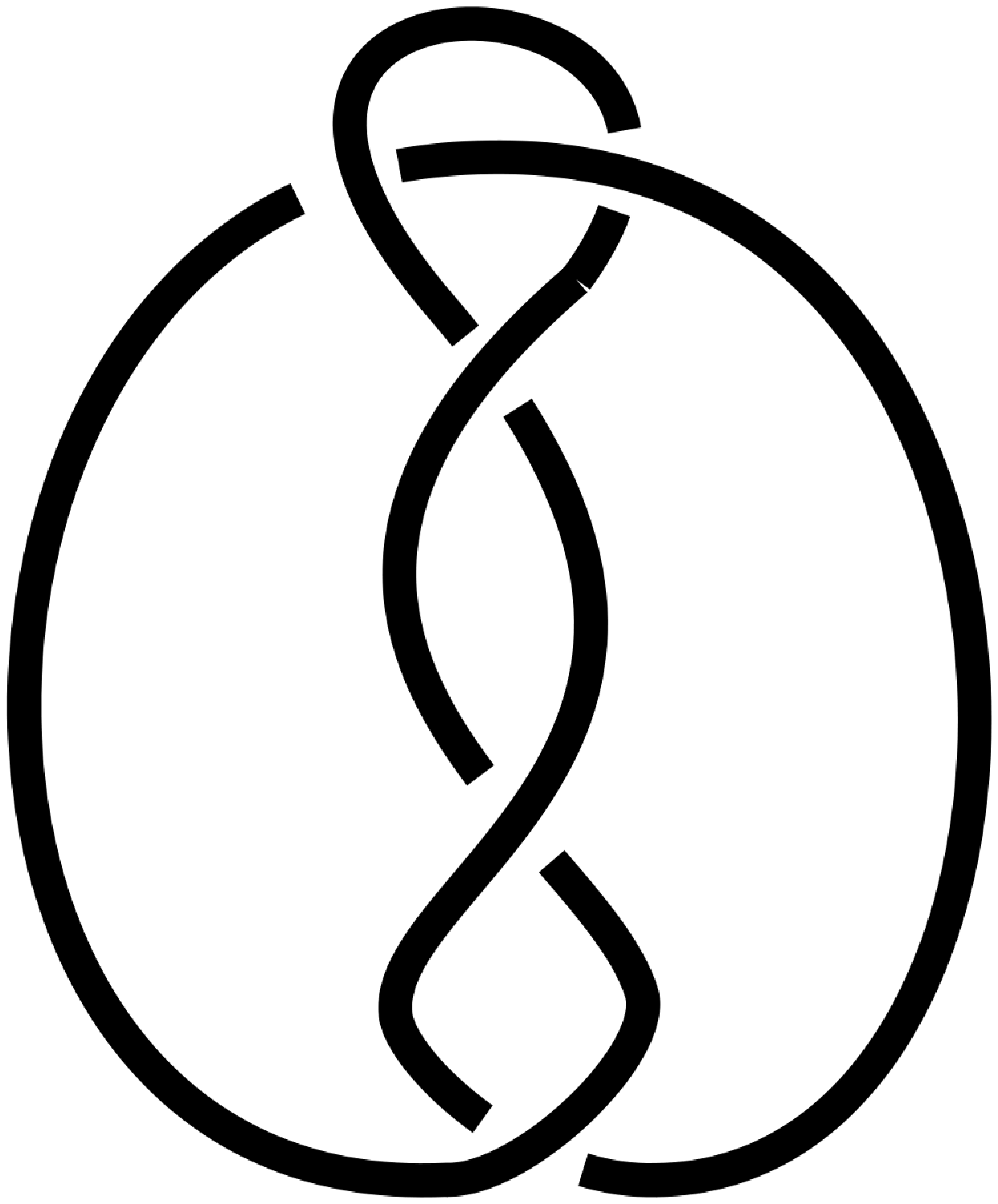}
\end{center}
\caption{Three pseudodiagrams: the first is a shadow, and the last is a knot diagram. The last is knotted, whereas the first two may or may not be trivial, depending on how the precrossings are resolved.}\label{Pseudo}
\end{figure}

Pseudodiagrams are good for lots of things, such as modeling DNA. But they're also really good for playing games! We will describe two pseudodiagram games that we invented, and prove some basic results about them. There is a lot of room to play around with knot games, and we hope knot games will lead to interesting results in traditional not-game knot theory.

\section*{Act II: To Knot or Not to Knot}

In our first game, \emph{To Knot or Not to Knot}, two players, Ursula and King Lear, are given a shadow and will take turns resolving crossings. {\bf U}rsula attempts to turn the shadow into an {\bf u}nknotted pseudodiagram, while {\bf K}ing Lear attempts to create a {\bf k}notted pseudodiagram. Let's sit in on a game to get an idea of how this works. Our players will play on the Twelfth Knot (or rather, the shadow of the standard diagram of the twelfth knot in the knot table, \cite{KnotTable}). The pre-game setup is in Figure \ref{examplegame}A.

The players have agreed that Ursula go first. She makes the move in Figure \ref{examplegame}B. At this point, it is clear that the pseudodiagram is not yet determined to be knotted or unknotted. King Lear then makes the move in Figure \ref{examplegame}C. Again, the pseudodiagram is not determined. Ursula, wanting to simplify the knot so that it is more likely to be trivial, makes the move in Figure \ref{examplegame}D. She has created a situation in which a Reidemeister 2 move can simplify the diagram by removing two crossings. Despite the fact that she has guaranteed the knot is one that can be drawn with five or fewer crossings, she has still not resolved the pseudodiagram to be the unknot. Hence, King Lear still has a chance to win. 

\begin{figure}[h]
\begin{center}\includegraphics[height=3in]{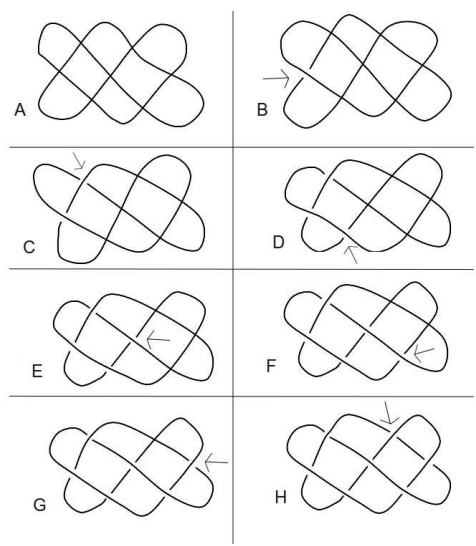}
\end{center}
\caption{Possible play of \emph{To Knot or Not to Knot}. A is the setup; B through G are successive stages of the game; Ursula wins by moving from G to H.}\label{examplegame}
\end{figure}

Suppose King Lear decides on the move in Figure \ref{examplegame}E. You may check that there is still a way to determine the remaining precrossings so that the diagram is knotted, as well as a way to resolve them to form the unknot. In an attempt to simplify the knot, Ursula tries the move in Figure \ref{examplegame}F.

Resolving the remaining crossings will create either the unknot or the figure-eight knot. You can see the figure-eight knot as knot 4.1 in \cite{KnotTable}. King Lear attempts to resolve another precrossing so that it is possible to realize the figure-eight knot in the diagram. Clever Ursula sees that her final decision is critical! She has the power to knot or to unknot the pseudodiagram. Naturally, she decides to create the unknot. Do you see how to use Reidemeister moves to unknot the diagram in Figure \ref{examplegame}G?

Ursula has won! Clever girl!

Perhaps you would like to see what might have happened had King Lear been the first player to move. Could he have guaranteed a win for himself? What might have happened if they had played on a different shadow? At this point, the authors encourage you take an intermission, find a colleague, friend, student, or dean---and play a game of your own!

\section*{Act III: A bliss in proof}
Given a shadow and a player to go first, either King Lear or Ursula has a winning strategy. We investigate, for certain classes of shadows, which player this is.

We now play the game on an infinite family of shadows. This may take a while.  The shadows we consider are from a family of knots known as \emph{twist knots}.  A shadow of one of these knots is illustrated in Figure~\ref{twistknot}.

\begin{figure}[h]
\begin{center}
\includegraphics[width=.3\textwidth]{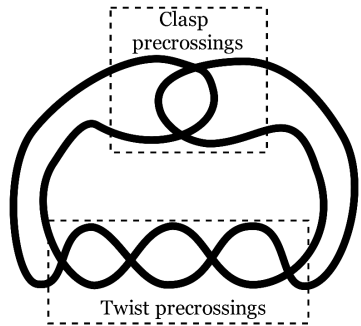}
\caption{The shadow of a twist knot.}
\label{twistknot}
\end{center}
\end{figure}

\begin{theorem}\label{twisties}
Suppose \emph{To Knot or Not to Knot} is played on a standard shadow of a twist knot with $n$ twist precrossings. Then, King Lear has a winning strategy if and only if $n$ is even and King Lear plays second.
\end{theorem}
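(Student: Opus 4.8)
\section*{Proof proposal}

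The plan is to collapse the game to a contest about two integer quantities and then analyze that contest by parity. Write the resolved twist region as a word in $\pm$ (one sign per twist precrossing) and let $a$ be the signed sum of these $n$ signs, so $a\equiv n\pmod 2$; let $b\in\{-2,0,2\}$ be the signed sum of the two clasp resolutions. The first and hardest task is a knot-theoretic \emph{characterization lemma}: the fully resolved standard twist-knot diagram is the unknot if and only if $ab\in\{0,-2\}$. I would prove it by reduction. Two adjacent twist crossings of opposite sign cancel by a Reidemeister~2 move, so inductively the twist region reduces to $|a|$ crossings of one sign (the sign of the majority), independent of the arrangement; likewise when $b=0$ a Reidemeister~2 move deletes the clasp. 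The easy direction is then $b=0$ (clasp deleted, the remaining twists fall off an unknotted loop) and $a=0$ (twist region gone, at most two crossings survive, so crossing number $\le 2$ forces the unknot). The delicate point is the single residual twist: when $|a|=1$ and the clasp is genuine, a $3$-crossing diagram survives, and one must show it is the trefoil when twist and clasp share a sign ($ab=2$) but the unknot when they have opposite signs ($ab=-2$). I would check this by a direct Reidemeister reduction, or by recognizing the diagram as the two-bridge (rational) knot of fraction $(ab+1)/a$, which is trivial precisely when its numerator is $\pm1$; as a sanity check this yields exactly six unknotting resolutions of the trefoil shadow ($n=1$). Splitting by parity: for $n$ even the diagram is the unknot iff $a=0$ or $b=0$, and for $n$ odd iff $b=0$ or ($|a|=1$ and $ab=-2$).

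The single most useful lever for the game is that the player who resolves the \emph{second} clasp precrossing controls $b$ completely, and in particular can always set $b=0$; hence if Ursula is ever that player she wins outright, and King Lear can win only if he himself makes the second clasp move and makes the clasp genuine. I would pair this with two twist facts: whoever makes the last twist move can force $a\neq 0$ (just before it the running sum is odd, hence nonzero, and one further step keeps it nonzero), whereas a player who can answer each of the opponent's twist moves by an opposite-sign twist move can drive the running sum to $0$, or, with one move to spare, to $\pm 1$. Because exactly $N=n+2$ crossings are resolved, the parity of $n$ determines who is forced to make the last, and hence the second clasp, move in the line where both players avoid the clasp.

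The four cases then run as follows. (a) $n$ even, King Lear second: King Lear answers any clasp move of Ursula's by a same-sign clasp move (genuine $b$) and otherwise plays twist; one checks he is guaranteed the last twist move, so he forces $a\neq 0$ and wins. (b) $n$ even, King Lear first: Ursula (moving second) mirrors each move in a fixed partner with the opposite sign, forcing $a=0$ on the twist pairs or $b=0$ on the clasp pair, and wins. (c) $n$ odd, King Lear first: the avoid-the-clasp endgame would hand King Lear the second clasp move, so Ursula instead mirrors King Lear's twist moves with opposite signs (possible since she answers each of his twist moves); as $n$ is odd this forces the running twist sum to $0$ just before King Lear's final unmatched twist move, so $|a|=1$ afterward, and she then plays the first clasp crossing with sign opposite to $a$, leaving King Lear no resolution avoiding $ab\in\{0,-2\}$; and if King Lear ever touches the clasp early she answers it to set $b=0$, so Ursula wins. (d) $n$ odd, King Lear second: Ursula simply never initiates the clasp, which by the parity of $N$ makes her the second clasp mover in every line, so she sets $b=0$ and wins. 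Assembling the four cases gives exactly ``King Lear wins if and only if $n$ is even and King Lear plays second.''

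The main obstacle is the characterization lemma, and inside it the single-twist cancellation ($|a|=1$, $ab=-2$): the entire odd case, and case (c) in particular, rests on the fact that one leftover twist is annihilated by an oppositely signed clasp but preserved by a like-signed one, and establishing exactly this---while ruling out any other accidental trivializations---is where the genuine knot theory sits. On the combinatorial side, case (c) is the subtle line, since it is the only branch in which Ursula must win through the twist-and-sign interplay rather than by zeroing the clasp; I would write that argument out most carefully.
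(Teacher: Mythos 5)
Your proposal is correct, but it takes a genuinely different route from the paper's proof. The paper never classifies which resolutions are knotted: it argues entirely through game reductions driven by Reidemeister moves. Ursula (resp.\ King Lear) answers a twist move by resolving an \emph{adjacent} twist precrossing so that the pair cancels (resp.\ does not cancel) by a Reidemeister 2 move, shrinking the shadow two twists at a time, and the whole theorem reduces to the base cases $n=1$ (Ursula cancels Lear's lone twist against the lower clasp crossing, leaving a one-crossing diagram) and $n=2$ (Lear locks in two non-cancelling twists and a genuine clasp, producing a trefoil or figure-eight). The only knot theory the paper imports is that the trefoil and figure-eight are nontrivial, plus the observations that a cancelled clasp, or an emptied twist region, unknots everything. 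You instead front-load all the knot theory into the characterization lemma (unknot iff $ab\in\{0,-2\}$), proved via the two-bridge/rational classification, after which the game collapses to parity bookkeeping on the two integers $a$ and $b$. Your lemma is right (the fraction $(ab+1)/a$ and the six-of-eight sanity check for $n=1$ both come out correctly), and all four of your case analyses go through; in particular your case (c) move --- Ursula playing the first clasp crossing with the sign that makes Lear's genuine completion satisfy $ab=-2$ --- is exactly the paper's $n=1$ base case in disguise. What your approach buys: responses never need to be adjacent (any opposite-sign twist works, since only the signed sum matters), you obtain a complete description of the win condition that would transfer to other games on the same shadows, and the endgame analysis is uniform rather than ad hoc. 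What it costs: your Lear in case (a) lets $|a|$ grow arbitrarily, so you need nontriviality of \emph{every} twist knot (hence Schubert's classification or an equivalent input), whereas the paper's cancel-until-$n=2$ strategy caps the final diagram at four crossings and needs only the two smallest knots; and your argument requires careful sign conventions (genuine clasp $=$ same signs, cancelling pair $=$ opposite signs) that the paper sidesteps by speaking only of whether a pair ``can'' or ``cannot'' be removed by a Reidemeister 2 move.
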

\begin{proof}
Observe that if King Lear is first of the two players to play in the clasp region, then Ursula wins by playing on the other clasp precrossing. This is because if the clasp can be taken apart by a Reidemeister 2 move, then the entire pseudodiagram is necessarily unknotted, as the rest of the precrossings can be untwisted by a sequence of Reidemeister 1 moves.

\begin{case}
King Lear plays first and $n$ is even.
\end{case}
Ursula has the following winning strategy. She will always play at a twist precrossing, unless King Lear has just played in the clasp. Since there are an even number of twist precrossings, King Lear necessarily plays first in the clasp. Then, Ursula plays on the other clasp precrossing and wins.

\begin{case}
King Lear plays second and $n$ is odd.
\end{case}
Ursula's strategy from Case 1 ensures victory here as well.

\begin{case}
King Lear plays first and $n$ is odd.
\end{case}
Ursula should only play in the clasp if King Lear has just played there or if there are no remaining twist precrossings. 

Whenever King Lear plays on a twist precrossing, Ursula plays on an adjacent twist precrossing, so that the two resolved crossings can be removed by a Reidemeister 2 move. This reduces the game to playing on a twist shadow with $n-2$ twist precrossings. Therefore, it suffices to show that Ursula has a winning strategy when $n =1$. 

We have seen that Ursula can win if King Lear plays first in the clasp. Suppose instead that King Lear's first move is to resolve the unique twist precrossing. Then, Ursula can resolve the lower clasp precrossing so that the two resolved crossings can be removed by a Reidemeister 2 move. Now, there is only one crossing left, and so the diagram is necessarily unknotted.

\begin{case}
King Lear plays second and $n$ is even.
\end{case}
King Lear has a winning strategy. If at any time Ursula plays in the clasp, King Lear should also play in the clasp so that those two crossings \emph{cannot} be removed by a Reidemeister 2 move.

Whenever Ursula plays on a twist precrossing, King Lear can play on an adjacent twist precrossing, so that the two resolved crossings can be removed by a Reidemeister 2 move. This reduces the number of twist precrossings by two. Therefore, it is sufficient to show that King Lear has a winning strategy when $n=2$. 

When Ursula plays on one of the two twist precrossings, King Lear should play on the other, so that these two crossings \emph{cannot} be removed by a Reidemeister 2 move. This produces a diagram of either the trefoil (the leftmost knot in Figure \ref{Diagram}) or the figure-eight knot. Congratulations King Lear, you finally beat her!
\end{proof}

Twist knots are an example of a large family of knots called \emph{rational knots.}  
Rational knots can be named by means of Conway notation. Conway notation records the number of consecutive horizontal twists, followed  by the number of consecutive vertical twists, followed by the number of twists in the next set of horizontal twists, etc. An example of Conway notation is illustrated in Figure \ref{conway}. A more rigorous discussion of rational knots can be found in \cite{knotbook}. Rational knots are more tangly than twist knots, but let's see if we can generalize our game strategies to this larger class of knots.

\begin{figure}[h]
\centering
\includegraphics[scale=.3]{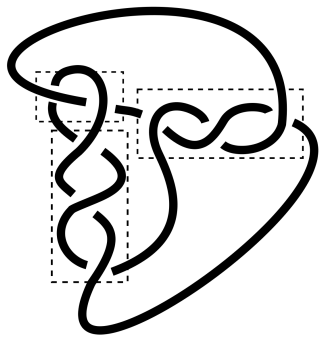}
\caption{A knot with Conway notation -2 3 3.}
\label{conway}
\end{figure}

\begin{theorem} For any shadow with Conway notation containing only even integers, the second player has a winning strategy, regardless of whether they wish to knot or unknot.
\end{theorem}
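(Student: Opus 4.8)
The plan is to give the second player a \emph{pairing strategy}. Since every integer in the Conway notation is even, each twist region contains an even number of precrossings, so the second player fixes, once and for all, a partition of the precrossings in each region into adjacent pairs. Whenever the first player resolves a precrossing, the second player immediately resolves its partner in the same pair. This is always legal: after each of the second player's replies every pair is either untouched or fully resolved, so the first player is always forced to open a fresh pair, which the second player then closes. The point of the pairing is that, being second to move in each pair, the second player completely controls whether its two crossings can be removed by a Reidemeister~2 move (play the opposite crossing) or cannot (play the matching crossing), exactly as in the proof of Theorem~\ref{twisties}.

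First I would treat the player who wants to \emph{unknot} (Ursula moving second), which is the easy case. She resolves each partner so that every pair is Reidemeister~2 removable. Then every twist region cancels completely, the diagram reduces by Reidemeister moves to a simple closed curve, and she has produced the unknot.

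The substance is the player who wants to \emph{knot} (King Lear moving second). Here his target for each region is a net contribution of exactly $\pm 2$: in each region he makes the \emph{first} pair opened there non-removable (matching the first player's sign) and every later pair removable. Because all crossings in a single region are parallel twists, the region's final number of signed crossings is just the signed sum of its crossings, so the removable pairs cancel and the surviving non-removable pair leaves a net twist of $+2$ or $-2$. Thus, no matter how the first player plays, King Lear forces the final diagram to be the rational knot with continued fraction $[2\varepsilon_1, 2\varepsilon_2, \dots, 2\varepsilon_n]$, where each $\varepsilon_i \in \{+1,-1\}$ is the first sign injected in region $i$ by the first player and $n$ is the number of twist regions.

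The main obstacle is then to show this knot is nontrivial for \emph{every} choice of signs $\varepsilon_i$, since King Lear does not control them; I would do this with the determinant. Writing the continued-fraction numerators by the recurrence $P_{-1}=0$, $P_0=1$, $P_k = 2\varepsilon_k P_{k-1} + P_{k-2}$, the determinant of the knot is $|P_n|$, and from $|P_k| \ge 2|P_{k-1}| - |P_{k-2}|$ a one-line induction gives $|P_k| > |P_{k-1}|$ for all $k \ge 1$, hence $|P_n| \ge n+1 \ge 3$ whenever $n \ge 2$ (the case of interest, since a one-component rational shadow with all-even Conway notation necessarily has an even number of twist regions). A determinant of at least $3$ cannot be the unknot, so King Lear wins. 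The two delicate points to verify are that each region really does reduce to its signed-crossing count, and that the signs the first player injects genuinely cannot drag the determinant down to $1$ — which is exactly what this triangle-inequality growth estimate rules out.
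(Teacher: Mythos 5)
Your proposal is correct, but it takes a genuinely different route from the paper's in the decisive case. The shared core is the pairing response: since every Conway entry is even, the second player can always answer a move by resolving an adjacent precrossing in the same twist region. The paper has \emph{both} the knotter and the unknotter use this reply purely to cancel (every completed pair Reidemeister 2 removable) until the shadow collapses to a twist-knot shadow with an even number of twist precrossings, and then simply invokes Theorem~\ref{twisties} to finish, so the forced knots there are just the trefoil or the figure-eight. You bypass Theorem~\ref{twisties} entirely: your unknotter cancels every pair outright, leaving a crossingless one-component diagram (a genuine streamlining of the paper's argument), while your knotter preserves exactly one pair per region --- the first one opened, matched in sign --- forcing the rational knot with continued fraction $[2\varepsilon_1,\dots,2\varepsilon_n]$, whose nontriviality you certify by the determinant via $|P_k|\ge 2|P_{k-1}|-|P_{k-2}|$ and hence $|P_n|\ge n+1$. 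That inequality uses only that each entry has absolute value $2$, so it is immune to the sign and ordering conventions for rational tangles; your parity remark ($n$ even for a one-component all-even shadow) is correct but not strictly needed, since any determinant exceeding $1$ already rules out the unknot, and keeping only the \emph{first}-opened pair per region is exactly what guarantees net twist $\pm 2$ there no matter which signs the first player injects. What the paper's route buys is self-containedness: nothing beyond Reidemeister moves and the already-proven twist-knot theorem. What your route buys is a quantitative strengthening --- the second-moving knotter forces determinant at least $n+1$, hence knots of arbitrarily large determinant as the number of twist regions grows --- at the cost of importing standard but nontrivial facts about rational knots (Conway's classification by fractions and the determinant-equals-numerator formula) that lie outside the elementary toolkit the paper allows itself.
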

\begin{proof}
It suffices to show that the second player can reduce the game to one on a twist knot shadow with an even number of precrossings.

Until the shadow becomes a twist knot (there exist only two sets of twists, one of which has exactly two twists), the second player should apply the following strategy. Whenever the first player resolves a precrossing in some set of consecutive twists, the second player resolves an adjacent precrossing in that set so that these two resolved crossings can be eliminated by a Reidemeister 2 move.  Observe that the second player can always resolve such an adjacent precrossing as the number of twists in every set is even. 

By applying this strategy repeatedly and reducing the shadow by a Reidemeister 2 move at each stage, the second player ensures that a twist knot shadow with an even number of precrossings is eventually obtained. An even number of precrossings have been resolved, so the second player plays second on this twist knot shadow as well.  Since it was shown in Theorem \ref{twisties} that the second player has a winning strategy for such a game, it follows that the second player has a winning strategy on $S$.
\end{proof}

King Lear now worries that he always loses when he is first to move on a shadow. Nay, good my lord, be not afraid of shadows!
\begin{theorem} If a certain player has a winning strategy on some shadow $S$ when playing second, then that same player has a winning strategy playing first on the shadow $S'$, where $S'$ is derived from $S$ as shown in Figure \ref{nugatory}.
\end{theorem}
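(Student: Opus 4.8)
The plan is to exploit the defining feature of the construction in Figure~\ref{nugatory}: the crossing adjoined to $S$ to produce $S'$ is \emph{nugatory}, meaning that once it is resolved---in either of the two possible ways---it can be eliminated by a single Reidemeister~1 move. Consequently, for any complete resolution of the precrossings of $S'$, the resulting knot type is identical to the knot type obtained from the corresponding resolution of $S$; the extra kink contributes nothing. In particular, whether a fully resolved instance of $S'$ is knotted or unknotted depends only on how the precrossings inherited from $S$ are resolved, and not at all on the resolution of the nugatory precrossing.

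With this observation in hand, the winning strategy for the player in question---call them $P$---playing first on $S'$ is almost immediate. On the first move, $P$ resolves the nugatory precrossing (it does not matter which way). After this move, the position is, up to the irrelevant kink, a copy of the shadow $S$ with the opponent to move. In other words, $P$ has manufactured exactly the situation of playing \emph{second} on $S$. By hypothesis, $P$ possesses a winning strategy in that situation, and $P$ simply follows it for the remainder of the game, interpreting each subsequent move as a move on $S$.

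To verify that this is a winning strategy, I would observe that every later resolution of a precrossing of $S'$ corresponds to a resolution of the same precrossing of $S$, and conversely, since the nugatory crossing has already been resolved and plays no further role. Hence the sequence of plays after $P$'s opening move is precisely a game on $S$ in which $P$ moves second, and the final knot type agrees with that of the corresponding game on $S$. Therefore $P$ achieves whichever outcome (knotted or unknotted) their winning strategy on $S$ guarantees, and so $P$ wins on $S'$.

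The one point demanding care---and the main thing I expect to have to pin down---is the claim that adjoining and then resolving the nugatory crossing leaves the knot type unchanged for \emph{every} resolution pattern, rather than merely for some. This rests on the fact that a nugatory crossing is removable by a Reidemeister~1 move independently of its over/under information, so one should confirm directly from Figure~\ref{nugatory} that the added crossing is genuinely nugatory in $S'$, i.e., that the loop forming the kink bounds a disk meeting the rest of the diagram only at that crossing. Once this is checked, the parity-shifting argument above closes the proof, and notably it applies uniformly whether $P$ is Ursula or King Lear, since it never uses which of the two winning conditions $P$ is pursuing.
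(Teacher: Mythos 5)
Your proposal is correct and follows essentially the same route as the paper's own proof: play first on the new precrossing $c$, note that either resolution of this nugatory crossing can be untwisted by a Reidemeister~1 move to recover $S$, and then follow the assumed second-player winning strategy on $S$. Your additional care about verifying that the kink is removable regardless of its resolution is a sound elaboration of exactly the step the paper treats implicitly.
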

\begin{figure}[h]
\begin{center}
\includegraphics[width=.6\textwidth]{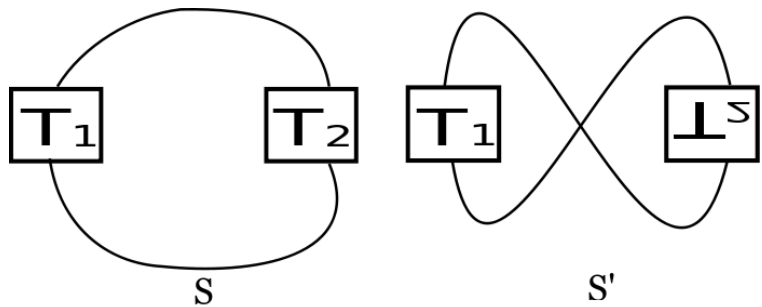}
\caption{The shadow $S'$ is obtained from $S$ by flipping $T_2$, introducing a new precrossing.}
\label{nugatory}
\end{center}
\end{figure}
\begin{proof}
The first player on the shadow $S'$ may play on the new precrossing $c$. This allows the shadow to be untwisted at $c$ to recreate the shadow $S$. The first player will now be the second player to play on $S$, and therefore has a winning strategy.
\end{proof}

We've seen games where King Lear can win playing first, and games where King Lear can win playing second.  We've also seen games where Ursula has a winning strategy regardless of who plays first.  King Lear wants to know: Is there any shadow on which he always has a winning strategy regardless of who plays first?  We'd like to know too.

\section*{Act IV: Much Ado About Knotting}

\emph{To Knot or Not to Knot} is a fascinating game. Nonetheless, we switch gears to describe a second game. \emph{Much Ado About Knotting} is also a game for two players, Portia and Nerissa, who play on a shadow. The twist is that in this game, the players have distinct allowable moves while sharing a common goal. On each turn, {\bf P}ortia is allowed to resolve a precrossing as a {\bf p}ositive crossing, while {\bf N}erissa must resolve precrossings as {\bf n}egative. What does this mean? To see how signs determine over-/under-strand information, we choose an orientation (i.e. a direction to travel around) for a given pseudodiagram. Given a particular crossing in the diagram, look at the picture so that both crossing strands are oriented upwards. If the strand pointing up and to the right is the over-strand, then we say that the crossing is positive. Otherwise, we say the crossing is negative. Figure~\ref{Signs} illustrates the two types of crossings. 
\begin{figure}[h]
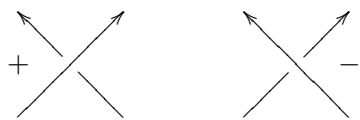

\[
\xy
(-54,-7)*{}; (-40, 7)*{} **\crv{}; \POS?(.5)*{\hole}="y"; ?(1)*\dir{>}+(-14, -7)*{+};
(-40,-7)*{}; "y" **\crv{};
"y"; (-54, 7)*{} **\crv{}; ?(0)*\dir{>}+(-2, -2)*{};
(-10,-7)*{}; (-24, 7)*{} **\crv{}; \POS?(.5)*{\hole}="x"; ?(1)*\dir{>}+(14, -7)*{-};
(-24,-7)*{}; "x" **\crv{};
"x"; (-10, 7)*{} **\crv{}; ?(0)*\dir{>}+(-2, -2)*{};
\endxy
\]
\caption{Sign of a crossing}\label{Signs}
\end{figure}
Now we know what the players are allowed to do, but what is their goal? Each player would like to move without resolving the diagram as knotted or unknotted. In effect, the player who decides whether or not the pseudodiagram is knotted loses. 

\begin{theorem}
Suppose \emph{Much Ado About Knotting} is played on a standard shadow of a twist knot with $n$ twist precrossings. Then, the first player has a winning strategy.
\end{theorem}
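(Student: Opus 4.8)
The plan is to first pin down exactly when a partially resolved twist-knot pseudodiagram is \emph{determined} — meaning every completion is knotted, or every completion is the unknot — and only then to exhibit an explicit first-player strategy. The crossing data splits into the two clasp crossings and the $n$ twist crossings. I would record the resolved twist crossings by their net signed count $a$ (positives minus negatives, which is all that matters, since a twist region is isotopic to $|a|$ twists and opposite-sign neighbors cancel by Reidemeister~2) and the clasp by $b\in\{-2,0,2\}$, where $b=\pm2$ is a locked clasp and $b=0$ an opened one. Using the rational-tangle description of twist knots, the key computation is that a full resolution is the unknot precisely when $ab\in\{0,-2\}$: an opened clasp always unknots (exactly as in Theorem~\ref{twisties}), while a locked clasp yields a nontrivial twist knot \emph{except} for the single critical value ($a=-1$ when $b=2$, and $a=1$ when $b=-2$), where one twist cancels the clasp. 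The engine of the whole argument is the consequence that, because Portia and Nerissa must play opposite signs, the first player can always neutralize a twist move by the opponent by answering with a twist of the opposite sign, thereby steering $a$.

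With this criterion in hand, the strategy opens identically in all cases: the first player resolves one clasp crossing — say as a $+$, the $-$ case being symmetric. This makes it immediately fatal for the opponent to touch the clasp again, since completing it would leave the two clasp crossings with opposite signs ($b=0$), forcing the unknot and making the opponent the decider. The opponent is thus confined to the twist region, and the two analyses then diverge on the parity of $n$. When $n$ is odd, the first player answers every twist move with an opposite-sign twist, holding the net count of the resolved pairs at $0$. I would verify that this keeps the position undetermined (one clasp crossing set, the other free, and at least one twist still free) and drives the game to the endgame with exactly one twist crossing and one clasp crossing left, the opponent to move. There both of the opponent's moves lose: opening the clasp forces the unknot, while resolving the last twist makes $a=-1$, the critical value, which against the first player's $+$ clasp crossing again forces the unknot.

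When $n$ is even, maintaining net twist $0$ backfires — the first player would be the one to zero out the twist region and hand over a determined unknot — so instead the first player locks the clasp on the very next move. After the opponent's forced opening twist (net $-1$) this produces a locked clasp ($b=2$) together with a pure net-twist subgame on the remaining $n-1$ twists, with the opponent to move from net $-1$. I would prove by induction on the number of free twists that the player to move loses this subgame: that player can only push the net further from $0$, and a short parity argument (valid because $a\equiv n\pmod 2$) shows they are eventually forced either to make $|\text{net}|$ exceed the number of remaining twists, a determined knot, or to resolve the final twist, a determined knot, thereby becoming the decider.

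The main obstacle is the determination criterion itself, and in particular the critical cancellation $ab=-2$: it is what lets a locked clasp sometimes produce the unknot, and it is exactly what dooms the opponent in the odd endgame, so getting its sign bookkeeping correct is the delicate part. The second genuine difficulty is establishing that the first player is never forced to be the decider, which is where the parity split and the inductive net-twist subgame do the real work; one must also check that at every stage the first player's prescribed response is legal (a suitable twist is always available) and leaves the position undetermined. Throughout, the structural asymmetry that hands the first player the win is precisely that the opponent, bound to the opposite sign, cannot mirror the opening clasp move.
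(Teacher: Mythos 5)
Your proposal is correct, and its strategic skeleton coincides with the paper's: the first player opens in the clasp (after which the opponent can never touch the clasp without determining the unknot), answers the opponent's twist moves with opposite-sign twists, and thereby forces the opponent to make the determining move. Where you genuinely diverge is in the verification machinery. The paper's engine is diagrammatic: each mirrored pair of opposite twists is erased by a Reidemeister 2 move, reducing the game on $n$ twists to the game on $n-2$ twists, so everything rests on the base cases $n=1$ and $n=2$, whose determinedness claims (e.g.\ that clasp $(+,+)$ with twists $(-,-)$ gives the figure-eight knot, while $(+,+)$ with a single $-$ twist gives the unknot) are asserted rather than proved. Your engine is the algebraic criterion: with $a$ the net twist sign-sum and $b\in\{-2,0,2\}$ the clasp state, a full resolution is the unknot iff $ab\in\{0,-2\}$, which turns every ``is this position determined?'' question into a mechanical check of whether the achievable net-twist values avoid or fill $\{0,-1\}$ --- and, as a bonus, it proves exactly the small facts the paper leaves to the reader. (Your sign bookkeeping is the geometrically correct one: the trefoil realization, all three crossings of one sign, pins the critical cancellation at $a=-\mathrm{sign}(b)$, consistent with the figure-eight knot having writhe $0$.) A second, smaller difference is move order in the even case: the paper mirrors twists down to two remaining and only then locks the clasp, whereas you lock the clasp immediately after the opponent's forced first twist and then mirror; both lines reach the same terminal position (locked clasp, net twist $-2$). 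One caveat: your claim ``the player to move loses this subgame'' is too strong as stated --- the plus-player to move at net $-1$ with two free twists \emph{wins} by moving to net $0$, forcing the opponent to finish at net $-1$, a determined unknot. What you need, and what your parity induction actually proves, is that the \emph{minus}-player to move at net $-1$ with an odd number of free twists loses; since in that subgame each player has only one kind of move available, the play is completely forced and the induction closes without difficulty.
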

\begin{proof}
Since both players have the same objective and their moves are symmetric, we can assume, without loss of generality, that Portia plays first. If instead Nerissa plays first, she can win by using the strategy described for Portia with the `positives' and `negatives' reversed.

Portia should play first in the clasp. Then, Nerissa cannot play in the clasp, since that would immediately determine the pseudodiagram as unknotted. Hence, Nerissa always moves on one of the twist precrossings.

Whenever Nerissa plays on a twist precrossing, Portia should reply by resolving an adjacent twist precrossing. This allows the two resolved crossings to be removed by a Reidemeister 2 move, reducing the game to playing on a twist shadow with $n-2$ twists. Hence, it suffices to show that Portia can win when $n=1$ and $n=2$.

If $n=1$, Nerissa loses immediately. So, suppose $n=2$. Nerissa has to resolve one of the twist precrossings. Then, Portia should resolve the only remaining clasp precrossing. The resulting diagram is still unresolved because if the final precrossing were made positive, then the diagram would represent an unknot, but if it were made negative, then the diagram would represent a figure-eight knot. However, Nerissa now loses, because the only precrossing available would resolve the diagram as knotted, if she were to play on it. Ay, that way goes the game.
\end{proof}

\section*{Act V: More games}

Here, we suggest a handful of additional games. We have not studied these games in any detail, but they all appear promising. 

\emph{Multimove Much Ado About Knotting} is a variation of \emph{Much Ado About Knotting} where Portia and Nerissa may resolve as many crossings as desired in each turn.

\emph{Much Ado About Specific Knotting} is also similar to \emph{Much Ado About Knotting}, but now we consider the game to end only when the exact knot type has been determined.

\emph{Much Ado About Virtually Knotting} is played by Cesario and Viola. On his turn, {\bf C}esario may resolve a precrossing as either a positive or a negative {\bf c}lassical crossing. On her turn, {\bf V}iola may resolve a precrossing as a third type of crossing called {\bf v}irtual. Introductory information about knot diagrams with virtual crossings may be found in \cite{vkt} or \cite{manturov}. As in \emph{Much Ado About Knotting}, the player who resolves the diagram as either knotted or unknotted is the loser.

In \emph{Out Out Damned Knot}, the two players, Rosencrantz and Guildenstern, are trying to trivialize a given pseudodiagram. They take turns resolving a precrossing. Whoever makes the diagram unknotted wins, and if either player makes the diagram knotted, then that player loses.

In \emph{In In Damned Knot}, Rosencrantz and Guildenstern wish to make the diagram knotted rather than unknotted.

\section*{Epilogue}
\verselinenumbersleft
\verselinenumfont{\itshape \tiny} 
\begin{verse}
If these shadows have offended,\\
Think but this, and all is mended,\\
That you have but slumber'd here\\
While these diagrams did appear.\\
And this weak and idle theme,\\
No more yielding but a dream,\\
Readers, do not reprehend:\\
If you pardon, we will mend:\\
And as we are a knotty team,\\
With knots and shadows do we dream\\
Now to 'scape the ref'ree's tongue.\\
We will make no other pun\\
Nor our ending longer stall\\
So, goodnight unto you all.\\
We gave you knots on which to game,\\
Now let new theorems be your aim.
\end{verse}

\section*{I can no other answer make but thanks}
The authors took part in the SMALL Summer 2009 REU at Williams College, supported by Williams College, Oberlin College, and NSF grant DMS-0850577. 

We would like to thank Colin Adams for his encouragement and advice.  We are grateful to Gary Bernhardt for writing a useful Python program that determines for small crossing knots whether a game is ended. Also, Ardea Thurston-Shaine for helping to find Shakespearean puns.

\section*{Summary} In this paper, we introduce playing games on shadows of knots. We demonstrate two novel games, namely, \emph{To Knot or Not to Knot} and \emph{Much Ado about Knotting}. We also discuss winning strategies for these games on certain families of knot shadows. Finally, we suggest variations of these games for further study. 

\bigskip

\end{document}